\newtheorem{thm}{Theorem}[section]
\newtheorem{lem}[thm]{Lemma}
\newtheorem{prop}[thm]{Proposition}
\newtheorem{cor}[thm]{Corollary}
\theoremstyle{definition}
\newcommand{\C}{\mathbb{C}}
\renewcommand{\P}{\mathbb{P}}
\newcommand{\Z}{\mathbb{Z}}
\newcommand{\F}{\mathcal{F}}
\newcommand{\I}{\mathcal{I}}
\renewcommand{\O}{\mathcal{O}}
\newcommand{\x}{[x]}
\newcommand{\e}{\operatorname{e}}
\newcommand{\ch}{\operatorname{ch}_2}
\newcommand{\CH}{\operatorname{ch}}
\newcommand{\td}{\operatorname{td}}
\newcommand{\M}{M^L(Y,\beta)}
\newcommand{\supp}{\operatorname{supp}}
\newcommand{\Fh}{\hat{F}}
\newcommand{\Zh}{\hat{Z}}
\newcommand{\Ch}{\hat{C}}
\newcommand{\Gh}{\hat{G}}
\newcommand{\CM}{Cohen-Macaulay }
\newcommand{\im}{\operatorname{im}}
\renewcommand{\hom}{\operatorname{Hom_Y}}
\newcommand{\ext}{\operatorname{Ext}_Y}
\newcommand{\hilb}{\text{-Hilb}}
\renewcommand{\hat}{\widehat}
\renewcommand{\tilde}{\widetilde}
\begin{document}
\title{BPS invariants for resolutions of polyhedral singularities}
\author{Jim Bryan and Amin Gholampour}
\maketitle

\abstract{We study the BPS invariants of the preferred Calabi-Yau
resolution of ADE polyhedral singularities $\C^3/G$ given by
Nakamura's $G$-Hilbert schemes. Genus 0 BPS invariants are defined by
means of the moduli space of torsion sheaves as proposed by Sheldon Katz
\cite{Katz-BPS}. We show that these invariants are equal to half the
number of certain positive roots of an ADE root system associated to
$G$. This is in agreement with the prediction given in
\cite{BGh-Ghilb} via Gromov-Witten theory.}

\section{Introduction} \label{sec:Intro} The BPS invariants of a
Calabi-Yau threefold $Y$ were first defined by Gopakumar
and Vafa in the context of $M$-theory \cite{Go-Va}. These invariants
are conjecturally closely related to the Gromov-Witten invariants of
$Y$, and since their appearance, there has been much effort to put
them into a rigorous mathematical framework. The most satisfying
approaches so far have been proposed by Sheldon Katz \cite{Katz-BPS}
via studying the moduli space of torsion sheaves on $Y$, and by Rahul
Pandharipande and Richard Thomas \cite{PT-BPS} by means of moduli
space of stable pairs on $Y$. The first considers all curve classes
but is restricted to genus zero invariants, while the second can be
defined for all genera but is confined to irreducible curve
classes. In this paper we take the first point of view.

For a finite subgroup $G$ of $SO(3)$, let
\[
Y=G\hilb(\C^3)
\]
be Nakamura's Hilbert scheme of $G$-clusters in $\C^3$. By
\cite{Nak-Ghilb} $$\pi:Y\to \C^3/G$$ is a Calabi-Yau semi-small
resolution of singularities of $\C^3/G$, where $\pi$ is the
Hilbert-Chow morphism. In particular, the fiber of $\pi$ over the
origin is 1-dimensional. Moreover, it is proven in \cite{Nak-Ghilb}
that the reduced fiber over the origin, denoted by $F$, is a connected
configuration of rational curves, and there exists a natural bijection
between irreducible components of $F$ and non-trivial irreducible
representations of $G$. It is shown in \cite{BGh-Ghilb}, that $Y$ can
be realized as a surface fibration over $\C$. The central fiber,
$S_0$, is a surface with finite number of ordinary double point
singularities and $F \subset S_0$.

Let $\hat{G}$ be the pullback of $G$ under the natural double cover
map $SU(2)\to SO(3)$, and let
\[
S=\Gh\hilb(\C^2).
\]
$S$ is a smooth surface and is the resolution of the ADE type
singularity $\C^2/ \Gh$. McKay correspondence gives a natural
bijection between irreducible components of the exceptional curve on $S$,
which we denote by $\Fh$, and non-trivial irreducible representations
of $\Gh$. By \cite{Boissiere-Sarti} there exists a regular projective
morphism $f:S\to Y$ that factors through $S_0$ and it is the minimal
resolution of singularities of $S_0$. Moreover $f$ maps $\Fh$ onto $F$
in such a way that it maps isomorphically a component of $\Fh$
corresponding to a representation of $G$ and contracts a component of
$\Fh$ otherwise. In Figure \ref{fig:ADE} we show the ADE Dynkin
diagrams associated to $\Fh$ in each case. Black vertices stand for
the curves that are contracted by $f$.
\begin{figure} \label{fig:ADE}
\centering
\includegraphics{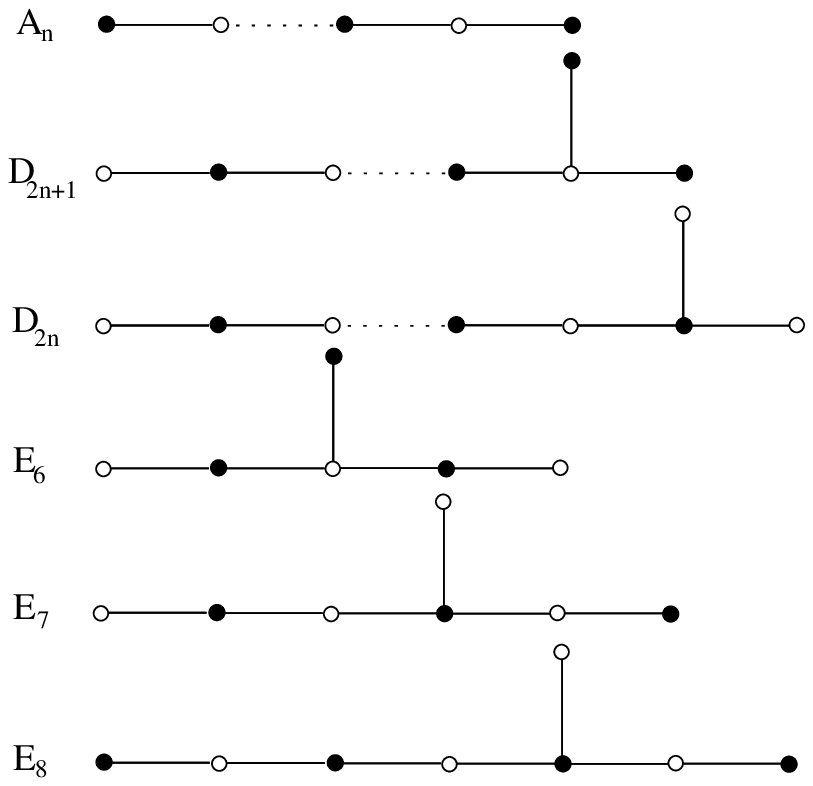}
\caption{ADE Dynkin diagrams}
\end{figure}

In order to define BPS invariants in this paper we make essential use
of the natural $\C^*$-action on $Y$ that is induced from the its
diagonal action on $\C^3$. This induces an action on $S$ with
respect to which $f$ is equivariant. Note that $F$, $\Fh$ and $S_0$
are invariant under these actions.

Let $L$ be a line bundle on $Y$ whose restriction to each irreducible
component of $F$ has a positive degree. Let $\beta \in H_2(Y,\Z)$ be a
curve class, and let $\M$ be the Simpson moduli space of $L$-stable
coherent sheaves $\F$ of pure dimension 1 on $Y$, with $\chi(\F)=1$
and $\ch(\F)=\beta$. Katz uses the moduli space $\M$ to define the
genus 0 BPS invariants of $Y$ in class $\beta$.
Since $Y$ is a Calabi-Yau threefold the Hilbert polynomial of any $\F$
of pure dimension 1 is
\[
P_\F(n)=(L\cdot \ch(\F))n+\chi(\F).
\]
By the condition $\chi(\F)=1$, semi-stability implies stability, hence
$\M$ admits a perfect obstruction theory by \cite{Thomas-Casson} and
hence a zero dimensional virtual cycle. $\M$ is not necessarily
compact because $Y$ is not, however it carries a $\C^*$-action that is
induced from the action on $Y$; the fixed locus of this action is
compact (see Theorem \ref{thm:fixed-locus}), so we can define BPS
invariants as equivariant residues of this virtual cycle at the fixed
locus.

Irreducible components of $F$ (respectively, of $\Fh$) represent a
basis for $H_2(Y,\Z)$ (respectively, for $H_2(S,\Z)$). The McKay
correspondence gives a natural identification of $H_2(S,\Z)$ with the
associated ADE root lattice denoted by $R$. This induces a map
$c:R^+\to H_2(Y,\Z)$ (via $f_*$) where $R^+\subset R$ is the set of
positive roots (see \cite{BGh-Ghilb} for more details). We say that
$\beta \in H_2(Y,\Z)$ corresponds to a positive root if it is in the
image of $c$.

One of the main result of this paper is determining the fixed locus of
the moduli space of BPS invariants:

\begin{thm}\label{thm:fixed-locus}
The $\C^*$-fixed point set of $\M$ consists of a single point if
$\beta$ corresponds to a positive root and is empty
otherwise. Moreover, the sheaf corresponding to the fixed point in
$\M$ is $\O_C$, the structure sheaf of a certain Cohen-Macaulay curve $C \subset Y$, which we define in
\S~\ref{sec:fixed-locus}.
\end{thm}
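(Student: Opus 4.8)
The plan is to transfer the moduli problem on the non-compact threefold $Y$ to the ADE geometry of the surface $S$, where invariant sheaves are governed by the root lattice $R$, and to show that $\C^*$-invariance plus $L$-stability rigidifies everything down to a single structure sheaf.

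\textbf{Step 1 (the support lies on $F$).} I would first observe that a $\C^*$-fixed point of $\M$ is represented by a genuinely $\C^*$-equivariant sheaf $\F$: stability makes $\F$ simple, so the isomorphisms $t^*\F\cong\F$ are unique up to scalar and assemble into a linearization. Its scheme-theoretic support $C$ is then a $\C^*$-invariant closed subscheme whose class $[\supp\F]=\beta$ is carried by the compact curve $F$, so $\supp\F$ is proper and $\pi(\supp\F)$ is a compact $\C^*$-invariant subset of $\C^3/G$. As the diagonal scaling has the origin as its only such subset (every nonzero orbit being non-compact), $\pi(C)=\{0\}$ and hence $C\subseteq F$ set-theoretically. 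Thus the whole fixed locus consists of $\C^*$-invariant pure one-dimensional sheaves supported on $F$.

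\textbf{Step 2 ($\F$ is a structure sheaf and the class is a root).} Since $F$ is a configuration of rational curves carrying a $\C^*$-action with isolated fixed points, an equivariant pure sheaf on $F$ is, component by component, a sum of equivariant line bundles glued along the nodes. I would argue that $\chi(\F)=1$, purity, and $L$-stability force $\F$ to be cyclic, namely the structure sheaf $\O_C$ of its scheme-theoretic support, where $C$ is a $\C^*$-invariant \CM curve whose (possibly non-reduced) structure records $\beta=\sum a_i[C_i]$ with $a_i\ge 0$; purity of $\F$ is equivalent to $C$ being \CM of pure dimension one. To evaluate $\chi$ I would transport $C$ to the smooth surface along $f:S\to Y$ (legitimate since $F\subset S_0$ and $\Fh=f^{-1}(F)$). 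On $S$ each component of $\Fh$ is a $(-2)$-curve, so $K_S\cdot\Fh_i=0$ and Riemann-Roch gives $\chi(\O_{C'})=-\tfrac12\,(C'\cdot C')$ for the lifted divisor $C'$. Hence $\chi=1$ forces $(C'\cdot C')=-2$, so $[C']$ is a root, and connectedness of $\supp\F$ makes it a \emph{positive} root. Conversely, each positive root $\alpha=\sum a_i\Fh_i$ determines an effective divisor $Z_\alpha$ with $\chi(\O_{Z_\alpha})=1$, whose pushforward provides the candidate curve $C\subset Y$ with $[\O_C]=c(\alpha)=\beta$.

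\textbf{Step 3 (uniqueness, and emptiness off the root locus).} The crux is to show that for each $\beta\in\im(c)$ there is exactly one invariant stable sheaf and none otherwise. On $S$, among invariant pure sheaves supported on $\Fh$ with $\chi=1$ and prescribed class, I would show that $L$-stability (equivalently the induced stability pulled back to $S$) together with $\C^*$-rigidity selects the single sheaf $\O_{Z_\alpha}$, the node-by-node gluing being completely rigidified by the equivariant structure. The main obstacle is controlling the partial contraction $f$, which collapses the black-vertex components: one must verify that $R^1f_*$ vanishes on the relevant sheaves so that $\chi$, purity, and the \CM property descend, and that $f_*\O_{Z_\alpha}$ is precisely the $\O_C$ defined in \S\ref{sec:fixed-locus}; this is also what legitimizes the comparison of $\chi$ across $f$ used above. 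For $\beta\notin\im(c)$ — e.g. an imprimitive class such as $2\alpha$ — the same $\chi=1$/root dictionary forbids any invariant stable sheaf, since its lift would be a connected effective class of square $-2$, hence a positive root, a contradiction.

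\textbf{Step 4 (reducedness).} Finally, to see that $\Mfix$ is a single \emph{reduced} point where nonempty, I would check that the invariant part of the tangent space vanishes, i.e. $\ext^1(\O_C,\O_C)^{\C^*}=0$, a local equivariant computation in a neighborhood of $F$. Together with Steps 1--3 this identifies $\Mfix$ with a single reduced point exactly when $\beta$ corresponds to a positive root, and with the empty set otherwise.
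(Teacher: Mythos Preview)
Your outline has the right shape, but two genuine gaps remain that the paper fills with specific constructions.

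First, Step~1 only yields $C_{\mathrm{red}}\subseteq F$ set-theoretically; you never show that the \emph{scheme-theoretic} support of $\F$ lies in $S_0$, and without this $C$ cannot be treated as a Weil divisor on $S_0$ or lifted to $S$. The paper (Lemma~\ref{lem: supp(F) is in S0}) uses that $S_0\subset Y$ is Cartier with trivial normal bundle: tensoring $0\to\O_Y\to\O_Y\to\O_{S_0}\to 0$ with $\F$ and invoking stability forces $\F\cong\F\otimes\O_{S_0}$. Your ``sum of equivariant line bundles glued at nodes'' picture in Step~2 is not available until this is done, since the thickening of $C$ could a priori leave $S_0$.

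Second, and more seriously, the ``lifted divisor $C'$'' in Step~2 is not well-defined. Since $f:S\to S_0$ contracts the curves $E_j$, a Weil divisor $Z=\sum m_iF_i$ on $S_0$ has infinitely many lifts $\sum m_i\Fh_i+\sum n_jE_j$, with different self-intersections and different $\chi$; there is no automatic equality $\chi(\O_Z)=\chi(\O_{C'})$. The paper's Lemma~\ref{lem:euler-char} makes the specific choice $n_j=\lceil k_j/2\rceil$ (where $k_j=E_j\cdot\sum m_i\Fh_i$), so that $\O_S(-\Zh)|_{E_j}$ has degree $0$ or $1$; this is precisely what guarantees $R^1f_*\O_{\Zh}=0$ and $f_*\O_{\Zh}\cong\O_Z$, hence $\chi(\O_Z)=\chi(\O_{\Zh})$. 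You flag the $R^1f_*$ issue as an ``obstacle'' in Step~3, but you have already used the $\chi$-comparison in Step~2 without resolving it. Moreover, the paper uses this lift to prove the inequality $\chi(\O_Z)\ge 1$ for \emph{every} proper effective Weil divisor on $S_0$ (via Artin's theorem on rational singularities), and it is this inequality---not just the equality case---that powers both directions: it shows any section $\O_Z\hookrightarrow\F$ destabilizes unless $\F\cong\O_C$ (Lemma~\ref{lem: F=O_C}), and it shows $\O_C$ is stable (Lemma~\ref{lem: O_C with chi=1 is stable}). Your Step~2 asserts cyclicity from ``$\chi=1$, purity, and $L$-stability'' with no mechanism; the missing ingredient is exactly this $\chi\ge 1$ bound.
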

This theorem is proven in \S~\ref{sec:fixed-locus}. Computing
equivariant residues of the virtual class at the fixed point given by
Theorem \ref{thm:fixed-locus}, we can show:

\begin{thm}\label{thm:GVinvariants} Let $\beta \in H_2(Y,Z)$, and let $n^L_\beta(Y)$ denote the genus 0 BPS invariants of $Y$ in class $\beta$ (see \S~ \ref{sec:GVinvarinats}). Then
$$n^L_\beta(Y)=\begin{cases} \frac{1}{2}|c^{-1}(\beta)| &  \beta \; \text{corresponds to a positive root}\\0 & \text{otherwise}. \end{cases}$$
\end{thm}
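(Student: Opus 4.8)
The plan is to compute $n^L_\beta(Y)$ by the virtual localization formula of Graber and Pandharipande applied to the $\C^*$-action on $\M$. Since the perfect obstruction theory of \cite{Thomas-Casson} has virtual dimension zero and the fixed locus $\Mfix$ is compact by Theorem \ref{thm:fixed-locus}, the equivariant residue
\[
n^L_\beta(Y)=\int_{[\Mfix]^{\mathrm{vir}}}\frac{1}{e^{\C^*}(N^{\mathrm{vir}})}
\]
is well defined and is a number, independent of the equivariant parameter $t$ by the dimension-zero constraint. When $\beta$ does not correspond to a positive root, Theorem \ref{thm:fixed-locus} gives $\Mfix=\varnothing$, so the residue vanishes and $n^L_\beta(Y)=0$; this disposes of the second case.

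For the first case, Theorem \ref{thm:fixed-locus} identifies $\Mfix$ with the single reduced point $[\O_C]$, so localization reduces to a single contribution,
\[
n^L_\beta(Y)=\frac{e^{\C^*}\big(\ext^2(\O_C,\O_C)^{\mathrm{mov}}\big)}{e^{\C^*}\big(\ext^1(\O_C,\O_C)^{\mathrm{mov}}\big)},
\]
where the superscript denotes the moving part of the $\C^*$-representation. The heart of the proof is therefore the determination of $\ext^\bullet(\O_C,\O_C)$ together with its $\C^*$-weights. I would compute these groups via the local-to-global spectral sequence $H^p\big(C,\mathcal{E}xt^q(\O_C,\O_C)\big)\Rightarrow\ext^{p+q}(\O_C,\O_C)$, using the explicit description of the \CM curve $C$ and of its embedding in $Y$ furnished in \S\ref{sec:fixed-locus}; since $C$ is supported on the one-dimensional fiber configuration, this reduces the calculation to data on the central surface $S_0$ and, after pulling back along $f:S\to Y$, to the ADE configuration $\Fh\subset S$ where the McKay correspondence makes everything explicit. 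Serre duality on the Calabi--Yau threefold $Y$, carried out $\C^*$-equivariantly, then pairs $\ext^1$ against $\ext^2$ up to the weight $\kappa$ of the (equivariantly nontrivial) holomorphic volume form coming from the diagonal action on $\C^3$; in particular it forces $\dim\ext^1(\O_C,\O_C)^{\mathrm{mov}}=\dim\ext^2(\O_C,\O_C)^{\mathrm{mov}}$, confirming that the residue above is a genuine number and reducing it to a product of weight ratios of the form $\prod_i\frac{\kappa-a_i}{a_i}$.

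The final and hardest step is to evaluate this product and match it with $\tfrac12|c^{-1}(\beta)|$. I expect the weights $a_i$ to be governed by the root-theoretic structure of $C$: the positive roots in $c^{-1}(\beta)$ should index the deformations of $\O_C$ obtained by turning on the components of $\Fh$ that $f$ contracts (the black vertices), and the diagonal $\C^*$-weight attached to each such deformation is read off from the McKay correspondence and from $\kappa$. The main obstacle is exactly this identification of the weights with root data and the subsequent arithmetic verifying that $\prod_i\frac{\kappa-a_i}{a_i}$ collapses to the value $\tfrac12|c^{-1}(\beta)|$. I would anchor the computation in the rank-one situation---a single $(-2)$-curve, where $C\cong\P^1$ and the residue works out to $\tfrac12$---and then control the general case through the Dynkin combinatorics governing which positive roots share the image $\beta$. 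The prediction of \cite{BGh-Ghilb} obtained via Gromov--Witten theory furnishes an independent check on the resulting count.
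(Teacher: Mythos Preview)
Your global framework is right and matches the paper: localization on $\M$, the vanishing when $\beta$ is not a positive root, and the reduction to the single fixed point $[\O_C]$ with
\[
n^L_\beta(Y)=\frac{e\big(\ext^2(\O_C,\O_C)\big)}{e\big(\ext^1(\O_C,\O_C)\big)}.
\]
Where you diverge from the paper is in how you propose to evaluate this ratio. You plan to compute the individual groups $\ext^i(\O_C,\O_C)$ with their weights via the local--to--global spectral sequence, then identify those weights with root data and collapse the product $\prod_i(\kappa-a_i)/a_i$ to $\tfrac12|c^{-1}(\beta)|$ by Dynkin combinatorics. You yourself flag this identification as ``the main obstacle,'' and indeed you do not carry it out. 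The difficulty is real: as the paper notes, in most cases $C$ is not a local complete intersection in $Y$, so the local $\mathcal{E}xt$ sheaves are not easy to describe, and no uniform root--theoretic formula for the weights is established. At that point your argument is a hope rather than a proof.

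The paper takes a different, more computational route that sidesteps this obstacle entirely. It never computes the individual $\ext^i$'s; it only computes the virtual character $\chi(\O_C,\O_C)=\sum_p(-1)^p\ext^p(\O_C,\O_C)$, which together with the known contributions of $\ext^0$ and $\ext^3$ (stability and equivariant Serre duality) is enough to read off the ratio of Euler classes. The virtual character is obtained by equivariant Hirzebruch--Riemann--Roch,
\[
\CH\big(\chi(\O_C,\O_C)\big)=\int_Y \CH^\vee(\O_C)\,\CH(\O_C)\,\td(Y),
\]
and the hard input $\CH(\O_C)$ is computed not on $Y$ but on $S$: one builds the divisor $\Ch$ on the smooth surface $S$ with $f_*\O_{\Ch}=\O_C$ and $R^if_*\O_{\Ch}=0$, uses the tautological resolution $0\to\O_S(-\Ch)\to\O_S\to\O_{\Ch}\to 0$, applies Grothendieck--Riemann--Roch for $f:S\to Y$, and evaluates the integral by Atiyah--Bott localization over the $\C^*$-fixed components of $Y$ and $S$. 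The final verification is then an explicit case--by--case check; the paper writes out the hardest instances (the $E_8$ case, longest root) and asserts the remaining cases are similar and simpler. So rather than a uniform root--theoretic argument, the actual proof is a concrete equivariant Riemann--Roch computation anchored in the geometry of $f:S\to Y$.
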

This result is in agreement with the prediction of $n^L_\beta(Y)$ in
\cite{BGh-Ghilb} obtained by means of Gromov-Witten theory.
\begin{cor}
The genus 0 Gopakumar-Vafa conjecture \cite[Conj~2.3]{Katz-BPS} holds
for $Y$.
\end{cor}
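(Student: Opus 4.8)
The plan is to obtain the Corollary for free by comparing the two independent evaluations of the same invariants, namely the Gromov--Witten computation of \cite{BGh-Ghilb} and the sheaf-theoretic computation recorded in Theorem \ref{thm:GVinvariants}. Recall that the genus 0 Gopakumar--Vafa conjecture in the form of \cite[Conj~2.3]{Katz-BPS} asserts that the genus 0 Gromov--Witten invariants $N^0_\beta(Y)$ are recovered from the sheaf-theoretic BPS invariants $n^L_\beta(Y)$ defined above through the multiple cover (Aspinwall--Morrison) formula
\[
N^0_\beta(Y)=\sum_{k\geq 1,\; k\mid\beta}\frac{1}{k^3}\,n^L_{\beta/k}(Y).
\]
Thus the entire content to be checked is that the values of $n^L_\beta(Y)$ produced by Theorem \ref{thm:GVinvariants} satisfy this identity against the already-known left-hand side; no new geometric input is required.

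First I would recall from \cite{BGh-Ghilb} the explicit shape of the genus 0 Gromov--Witten invariants of $Y$. There the $N^0_\beta(Y)$ were computed and then repackaged through the multiple cover formula, and it is precisely this repackaging that produced the prediction that the underlying integer BPS invariants equal $\tfrac12\lvert c^{-1}(\beta)\rvert$ for $\beta$ in the image of $c$ and vanish otherwise. In other words, \cite{BGh-Ghilb} establishes
\[
N^0_\beta(Y)=\sum_{k\geq 1,\; k\mid\beta}\frac{1}{k^3}\cdot\frac12\bigl\lvert c^{-1}(\beta/k)\bigr\rvert,
\]
with the convention that $\lvert c^{-1}(\gamma)\rvert=0$ when $\gamma$ is not the image of a positive root. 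Substituting Theorem \ref{thm:GVinvariants}, which gives $n^L_{\gamma}(Y)=\tfrac12\lvert c^{-1}(\gamma)\rvert$ for every class $\gamma$ under the same vanishing convention, turns this expansion term-by-term into
\[
N^0_\beta(Y)=\sum_{k\geq 1,\; k\mid\beta}\frac{1}{k^3}\,n^L_{\beta/k}(Y),
\]
which is exactly the relation demanded by \cite[Conj~2.3]{Katz-BPS}. Since this holds for all $\beta\in H_2(Y,\Z)$, the genus 0 Gopakumar--Vafa conjecture holds for $Y$.

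The only real point requiring care, and hence the main obstacle, is bookkeeping rather than geometry: one must confirm that the Gromov--Witten invariants of \cite{BGh-Ghilb} were genuinely written in the multiple cover form with exactly the root-counting coefficients $\tfrac12\lvert c^{-1}(\cdot)\rvert$, so that the comparison matches term-by-term in the summation variable $\beta/k$. This amounts to tracking, through the map $c$ and the contraction $f_*$, which classes $\beta/k$ with $k\geq 2$ still lie in the image of $c$; in the simply-laced ADE setting these divisibility relations are governed by the coefficients of positive roots on the white vertices of the Dynkin diagram, and in particular the vanishing convention handles every $\beta/k$ that is not a root image. Once this matching is verified, no further computation is needed, since both sides of the conjectural identity have already been evaluated --- one in \cite{BGh-Ghilb} and the other in Theorem \ref{thm:GVinvariants}.
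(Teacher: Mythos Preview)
Your argument is correct and matches the paper's approach: the Corollary is stated as an immediate consequence of Theorem~\ref{thm:GVinvariants} together with the Gromov--Witten computation of \cite{BGh-Ghilb}, and you have simply spelled out the comparison via the multiple cover formula that the paper leaves implicit. No separate proof is given in the paper beyond the sentence ``This result is in agreement with the prediction of $n^L_\beta(Y)$ in \cite{BGh-Ghilb} obtained by means of Gromov-Witten theory,'' and your write-up is a faithful expansion of that remark.
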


Note that
Theorem \ref{thm:GVinvariants} shows that $n^L_\beta(Y)$ does not
depend on the choice of polarization $L$. Proof of Theorem
\ref{thm:GVinvariants} is given in \S~ \ref{sec:GVinvarinats}, in
the most elaborate case, i.e.  $Y=A_5\hilb(\C^3)$ and $\beta$
corresponds to the longest root in $E_8$ root system.

\section{Proof of Theorem \ref{thm:fixed-locus}}
\label{sec:fixed-locus} Let $F_1,\dots, F_r$ be the irreducible
components of $F$. We know each $F_i \cong \mathbb{P}^1$; let
$\alpha_i=\deg(L|_{F_i})$, note that $\alpha_i>0$ by the choice of
$L$.

\begin{lem}\label{lem: supp(F) is in S0}
Suppose $\F \in \M$ is $\C^*$-fixed, then $\supp(\F)$ is contained in
$S_0$.
\end{lem}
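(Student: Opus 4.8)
The plan is to exploit the equivariance of the Hilbert–Chow morphism $\pi:Y\to\C^3/G$ together with the fact that the diagonal $\C^*$-action on $\C^3/G$ contracts the whole space onto the origin. Write $\sigma_t$ for the action of $t\in\C^*$ on $Y$. If $\F$ is a $\C^*$-fixed point of $\M$, then by definition its isomorphism class is preserved, so $\sigma_t^*\F\cong\F$ for every $t$. Since the set-theoretic support is an intrinsic invariant of the isomorphism class, this forces $\supp(\F)$ to be a $\C^*$-invariant closed subset of $Y$. As $\F$ has pure dimension $1$ and, being a point of the Simpson moduli space with fixed Chern character $\beta$, has proper support, I conclude that $\supp(\F)$ is a compact $\C^*$-invariant curve in $Y$.

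Next I would push this down via $\pi$. Because $\pi$ is $\C^*$-equivariant (the scaling action $t\cdot v=tv$ on $\C^3$ commutes with $G$ and hence descends to $\C^3/G$), the image $\pi(\supp(\F))\subset\C^3/G$ is again a compact $\C^*$-invariant subset. The key observation is that the only compact invariant subset of $\C^3/G$ under this scaling is the origin: for any $v\neq 0$ the orbit closure contains the entire line $\{\lambda v:\lambda\in\C\}$, whose image in $\C^3/G$ is unbounded and therefore cannot lie in a compact invariant set. Hence $\pi(\supp(\F))=\{0\}$, so $\supp(\F)\subseteq\pi^{-1}(0)$. Since $\pi^{-1}(0)$ is one-dimensional with reduction $F$, and $F\subset S_0$ by construction, I obtain $\supp(\F)\subseteq F\subseteq S_0$, which establishes the claim (and in fact the stronger containment in $F$).

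The step requiring the most care is the compactness used in the second paragraph: I need $\supp(\F)$ to be honestly compact, so that its image is a compact—hence bounded and closed—invariant subset, rather than merely closed and $\C^*$-invariant. This is exactly what is guaranteed by the properness of support built into the definition of $\M$, and it is the crux of the argument, because without it there do exist $\C^*$-invariant non-compact curves in $Y$, namely the orbit closures mapping onto lines through the origin, which escape the central fiber and would defeat the contraction argument. The auxiliary points—that the connected group $\C^*$ preserves each irreducible component of the invariant support, and that invariance of the support follows from $\C^*$-invariance of the isomorphism class—are routine and I would only state them briefly.
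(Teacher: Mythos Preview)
Your argument establishes only that the \emph{reduced} (set-theoretic) support of $\F$ lies in $F$; this is exactly the first step of the paper's proof (equation~(\ref{equ:reduced support})), and your contraction argument via $\pi$ is a perfectly good way to see it. But the lemma, as used immediately afterward, asserts that the \emph{scheme-theoretic} support of $\F$ is contained in $S_0$, i.e.\ that $\F$ is an $\O_{S_0}$-module. Your proof does not address this: a $\C^*$-invariant pure sheaf can have reduced support equal to some $F_i$ while being thickened in the direction normal to $S_0$ inside $Y$, and nothing in your argument rules this out. So the claim that you have proved ``in fact the stronger containment in $F$'' is mistaken --- set-theoretic containment in $F$ is weaker than scheme-theoretic containment in $S_0$, not stronger.

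The missing step, which is the real content of the lemma, uses stability together with the fact that $S_0\subset Y$ is a Cartier divisor with trivial normal bundle. From $0\to\O_Y\to\O_Y\to\O_{S_0}\to 0$ one tensors with $\F$ to get a map $\F\to\F$; stability forces this endomorphism to be zero or an isomorphism, and the reduced-support statement you proved rules out the latter. Hence $\F\cong\F\otimes\O_{S_0}$, which is precisely scheme-theoretic support on $S_0$. Without invoking stability somewhere you cannot reach this conclusion.
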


\begin{proof}
Since $F_1,\dots, F_r$ are the only 1 dimensional invariant
subvarieties of $Y$ \begin{equation}{\label{equ:reduced
support}}\supp(\F)_{\text{reduced}} \subseteq F \subset
S_0. \end{equation} $S_0 \subset Y$ is a Cartier divisor with trivial
normal bundle, hence there is an equivariant short exact
sequence $$0\to \O_Y \to \O_Y\to \O_{S_0}\to 0.$$ Tensoring with $\F$,
we get $$\F \to \F \to \F \otimes \O_{S_0}\to 0.$$ Since $\F$ is
stable the first map is either zero or an isomorphism. In the later
case we conclude $\F \otimes \O_{S_0}=0$ which is impossible by
(\ref{equ:reduced support}). Hence the first map in the exact sequence
above must be zero which implies $\F \cong \F \otimes \O_{ S_0}$, and
this proves the lemma.
\end{proof}

Since $\F$ is pure, $\supp(\F)$ is a Cohen-Macaulay curve. Note that
$\supp(\F)$ is uniquely determined by these further conditions (see
Lemma 3.2.i. in \cite{Katz-BPS} for a similar
situation): \begin{enumerate}[(i)] \item $\supp(\F)$ is a subscheme of
$S_0$, \item $\ch(\F)=\beta.$ \end{enumerate} The second condition
determines the multiplicity of $\supp(\F)$ at a generic point of each
$F_i$ (away from the singularities of $S_0$). This number is
well-defined and from now on we refer to it simply as multiplicity
along $F_i$.

\begin{lem}\label{lem: F=O_C}
Suppose $\F \in \M $ is $\C ^{*}$-fixed, then $\F \cong \O_C$ where $C$ is a Cohen-Macaulay curve.
\end{lem}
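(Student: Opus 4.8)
The plan is to identify the \CM curve $C$ with the effective divisor $\sum_i a_i F_i$ on $S_0$ whose multiplicities $a_i$ along the $F_i$ are the ones determined by $\beta$ (as in the paragraph preceding the lemma), to match the Hilbert polynomials of $\O_C$ and $\F$, and then to produce a nonzero morphism between them which purity and an Euler characteristic count force to be an isomorphism.

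First I would pin down the numerics. Set $C=\supp(\F)$, which by Lemma \ref{lem: supp(F) is in S0} and purity is a \CM curve inside $S_0$, and which is exactly the divisor $\sum_i a_i F_i$ recording the multiplicity of $\F$ along each $F_i$. Since $\beta$ corresponds to a positive root, the associated divisor on the resolution has self-intersection $-2$ and trivial intersection with the canonical class: the $F_i$ are $(-2)$-curves and the intersection form is the negative of the Cartan form, in which every root has square $2$ (see \cite{BGh-Ghilb}). A Riemann--Roch/adjunction computation then yields $\chi(\O_C)=-\tfrac12(C^2+K\cdot C)=1=\chi(\F)$. As $\O_C$ and $\F$ have the same reduced support $F$ and the same $\ch(\cdot)=\beta$, they have identical Hilbert polynomials, hence the same reduced Hilbert polynomial $p$.

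Next I would produce the morphism. Since $I_C$ annihilates $\F$ (generically along $F_i$, where $\O_{S_0,\eta_i}$ is a DVR with uniformizer $t$, the stalk $\F_{\eta_i}$ has length $a_i$ and is therefore killed by $t^{a_i}$), the sheaf $\F$ is an $\O_C$-module; and because $\chi(\F)=1>0$ we have $H^0(\F)\neq 0$, so a nonzero section gives a nonzero homomorphism $\varphi\colon\O_C\to\F$. The crucial step is to arrange that $\varphi$ is generically an isomorphism along every component. Here I would exploit the $\C^*$-fixedness of $\F$: the equivariant structure lets me analyze the module $\F_{\eta_i}$ and show it is cyclic, $\F_{\eta_i}\cong\O_{C,\eta_i}$, rather than a nontrivial sum of shorter pieces, for otherwise $\F$ would split off a subsheaf supported on a proper thickening of $C$ and violate stability. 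Connectedness of $C$ together with this generic cyclicity then lets me choose $\varphi$ so that $\im\varphi$ meets every component in full rank.

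Finally I would close with purity. Because $\O_C$ is \CM, hence pure, a $\varphi$ that is generically an isomorphism has zero kernel, so $\varphi$ is injective; its cokernel then has vanishing $\ch$ and Euler characteristic $\chi(\F)-\chi(\O_C)=0$, so it is a zero-dimensional sheaf of length $0$, i.e. $0$, and $\varphi$ is an isomorphism. I expect the main obstacle to be the previous step: establishing the generic cyclic (rank-one) structure of $\F$, and controlling its behaviour at the nodes of $S_0$ where two components of $C$ meet, when the root has multiplicities $a_i>1$ --- for instance the longest root of $E_8$ coming from $A_5\hilb(\C^3)$. This is precisely where the $\C^*$-equivariance and the detailed positive-root combinatorics must enter, and it is presumably the elaborate case treated in \S\ref{sec:GVinvarinats}.
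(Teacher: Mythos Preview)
Your proposal has a genuine gap and diverges from the paper's argument in a way that matters.

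\textbf{The circularity.} You write ``Since $\beta$ corresponds to a positive root\ldots'' and use this to compute $\chi(\O_C)=1$. But Lemma~\ref{lem: F=O_C} does not assume $\beta$ is a positive root; that conclusion is part of Theorem~\ref{thm:fixed-locus}, established only after this lemma (via Lemma~\ref{lem:euler-char}). Without that assumption you do not know $\chi(\O_C)=1$, so your final step---``the cokernel has Euler characteristic $\chi(\F)-\chi(\O_C)=0$''---is unsupported. Your Riemann--Roch computation on the resolution is also not directly applicable: $S_0$ is singular, and relating $\chi$ of a Weil divisor on $S_0$ to intersection numbers on $S$ is exactly the content of Lemma~\ref{lem:euler-char}, which you have not invoked.

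\textbf{The cyclicity step.} You acknowledge that showing $\F_{\eta_i}$ is cyclic is the ``main obstacle'' and do not prove it; the sketch (``otherwise $\F$ would split off a subsheaf\ldots'') is not an argument. In fact no such local analysis is needed.

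\textbf{How the paper proceeds.} The paper avoids both issues by reversing the logic. Take a global section $s:\O_Y\to\F$ (which exists since $\chi(\F)=1$), and let $Z$ be the subscheme with $\O_Z=\operatorname{im}(s)\subset\F$; purity of $\F$ makes $Z$ a \CM curve. One does \emph{not} try to show $Z=C$ directly. Instead, the key input is Lemma~\ref{lem:euler-char}: every proper effective Weil divisor on $S_0$ has $\chi\ge 1$. Thus $\chi(\O_Z)\ge 1$, while the multiplicities of $Z$ are at most those of $C$; stability of $\F$ then forbids $\O_Z$ from being a proper subsheaf, forcing $\O_Z=\F$ and hence $\F\cong\O_C$. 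No positive-root hypothesis, no generic cyclicity, and no equivariant analysis beyond knowing the reduced support lies in $F$. The elaborate $E_8$ computation you anticipate in \S\ref{sec:GVinvarinats} concerns the virtual class, not this lemma.
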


\begin{proof}
Let $C=\supp(\F)$. We know $C_{\text{reduced}}\subseteq F$ because $\F$ is $\C ^{*}$-fixed. Since $\F$ is 1 dimensional, $\chi(\F)=1$ implies that $\F$ has a
global section $s:\O_Y \to \F$. $\ker(s)$ is the ideal sheaf of a
subscheme $Z$ of $C$. We have $\O_Z=\operatorname{im}(s)$ is a
subsheaf of $\F$, hence by purity of $\F$, $Z$ has to be a
Cohen-Macaulay curve whose reduced support is a subset of $F$.  We
will show that $\O_Z$ will be a destabilizing subsheaf unless $Z=C$
and $\F \cong \O_C$.

To see this, let $m_i$ (respectively $m'_i$) be the multiplicity of
$C$ (respectively $Z$) along $F_i$.  Since $\O_Z$ is a subsheaf of
$\F$ we have $m'_i \le m_i$. By lemma \ref{lem:euler-char} proven
below, we know $\chi (\O_Z) \ge 1$, and by stability of $\F$ we must
have $$\frac{\chi(\O_Z)}{\sum m'_i \alpha_i} \le \frac{1}{\sum m_i
\alpha_i},$$ which is impossible unless $Z=C$ and $\F\cong \O_C$.
\end{proof}

\begin{lem}\label{lem: O_C with chi=1 is stable}
Suppose $C\subset Y$ is a proper, Cohen-Macaulay curve supported on $S_0$ with
$\chi(\O_C)=1$, then $\O _{C}$ is $\C ^{*}$-fixed and stable.
\end{lem}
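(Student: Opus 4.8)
The plan is to prove the two assertions separately: first that $\O_C$ is $\C^*$-fixed, and then that it is $L$-stable. For the fixedness, I would begin by noting that $C$ is proper, so $\pi(C)$ is a compact subvariety of the affine variety $\C^3/G$ and hence a point; as only the origin carries a positive-dimensional fiber, $C_{\text{reduced}}\subseteq F$. Since $C$ is Cohen--Macaulay of pure dimension $1$ and lives in the surface $S_0$, it has no embedded points and is thus the effective divisor $\sum_i m_i F_i$ on $S_0$ determined by its multiplicities $m_i$ along the components $F_i$. The $\C^*$-action preserves $S_0$, each $F_i$, and $\ch(\O_C)$, so for every $t\in\C^*$ the translate $t\cdot C$ is again a Cohen--Macaulay curve in $S_0$ with the same $\ch$; by the uniqueness of such a curve (the uniqueness of $\supp(\F)$ recorded after Lemma~\ref{lem: supp(F) is in S0}) we conclude $t\cdot C=C$, so $\O_C$ is $\C^*$-fixed.

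For stability, let $\mathcal{G}\subset\O_C$ be a nonzero proper subsheaf. Because $\O_C$ is pure of dimension $1$, so is $\mathcal{G}$, whence $L\cdot\ch(\mathcal{G})=\sum_i m'_i\alpha_i>0$, where $m'_i\le m_i$ is the multiplicity of $\mathcal{G}$ along $F_i$. The quotient $Q=\O_C/\mathcal{G}$ is a quotient of the cyclic $\O_Y$-module $\O_C=\O_Y/\I_C$, hence is itself cyclic, so $Q\cong\O_{C'}$ for a nonempty closed subscheme $C'\subseteq C$. Writing $A=\sum_i m_i\alpha_i$ and $B=\sum_i (m_i-m'_i)\alpha_i$ with $0\le B<A$, the slope inequality $\frac{\chi(\mathcal{G})}{L\cdot\ch(\mathcal{G})}<\frac{1}{A}$ becomes, using $\chi(\mathcal{G})=1-\chi(\O_{C'})$ and clearing the positive denominators, the single inequality $\chi(\O_{C'})>B/A$. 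Since $B/A<1$, it suffices to establish the bound $\chi(\O_{C'})\ge 1$.

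To obtain this bound I would split off the $0$-dimensional part of $C'$: letting $T\subset\O_{C'}$ be the maximal subsheaf of dimension $0$, we have $\chi(\O_{C'})=\operatorname{length}(T)+\chi(\O_{C'}/T)$. The quotient $\O_{C'}/T$ is pure of dimension $1$, hence the structure sheaf of a Cohen--Macaulay curve supported on $F\subset S_0$, so Lemma~\ref{lem:euler-char} gives $\chi(\O_{C'}/T)\ge 1$; together with $\operatorname{length}(T)\ge 0$ this yields $\chi(\O_{C'})\ge 1$, and stability follows (if $C'$ happens to be $0$-dimensional the same conclusion holds since then $\chi(\O_{C'})=\operatorname{length}(\O_{C'})\ge 1$). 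The main obstacle is precisely the passage from an arbitrary subsheaf $\mathcal{G}$ to a controllable subcurve: the crucial observations are that every quotient of the cyclic sheaf $\O_C$ is again a structure sheaf $\O_{C'}$, and that the needed estimate $\chi(\O_{C'})\ge 1$ is exactly Lemma~\ref{lem:euler-char} once the $0$-dimensional torsion has been removed. Everything else is the routine bookkeeping of the slope inequality above.
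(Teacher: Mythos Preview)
Your proof is correct and follows essentially the same route as the paper. Both arguments establish $\C^*$-fixedness from properness (forcing $C_{\mathrm{red}}\subseteq F$) and then check stability by passing to the quotient $\O_C/\mathcal{G}\cong \O_{C'}$ and showing $\chi(\O_{C'})\ge 1$ via Lemma~\ref{lem:euler-char}. The only cosmetic difference is that you strip the $0$-dimensional torsion $T$ from $\O_{C'}$ to obtain the Cohen--Macaulay curve to which Lemma~\ref{lem:euler-char} applies, whereas the paper instead writes down the Weil divisor on $S_0$ with the same generic multiplicities and observes that $C'$ differs from it only in dimension~$0$; by the uniqueness of the Cohen--Macaulay curve in $S_0$ with prescribed multiplicities these two auxiliary curves coincide, so the two arguments are the same in substance.
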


\begin{proof}
Since $C$ is proper $C_{\text{reduced}}\subseteq F$ and hence $\O_C$ is $\C^*$-fixed. To prove the lemma we check the
stability condition for $\O_C$ (See \cite[Proposition
1.2.6.iv]{Huybrechts-Lehn-book}). Suppose $\I$ is a subsheaf of
$\O_C$. $\I$ determines a subscheme $Z$ of $C$. If $Z$ is zero
dimensional there is nothing to prove, so assume that $Z$ is one
dimensional.
Let $m_i$ be the multiplicity of $C$ along $F_i$.
Let $C'$ be a proper Weil divisor on $S_0$ having multiplicity $m'_i$ along $F_i$, where $m'_i$ is the generic multiplicity of $Z$ along $F_i$. Since $Z$ is a subscheme of $C$ we have $m'_i \le m_i$. By Lemma \ref{lem:euler-char} below, $\chi(\O_{C'}) \ge 1$, and hence $\chi(\O_Z) \ge 1$, because $Z$ differs from $C'$ at possibly a finite number of points which they add up to its Euler characteristic. Thus $$\frac{\chi(\O_Z)}{\sum m'_i \alpha_i}\ge \frac{1}{\sum m_i \alpha_i},$$
with the equality only if $Z=C$. This proves the stability of $\O_C$, and the claim follows.
\end{proof}

Theorem~\ref{thm:fixed-locus} follows from Lemmas \ref{lem: supp(F) is
in S0}, \ref{lem: F=O_C}, and \ref{lem: O_C with chi=1 is stable} so
to complete the proof it remains to prove the follow lemma which was
quoted above.

\begin{lem} \label{lem:euler-char}
Let $Z$ be a proper effective Weil divisor on $S_0$. Then
$\chi(\O_Z)\ge 1$ and equality holds if and only if the homology class of $Z$ corresponds to a positive root.
\end{lem}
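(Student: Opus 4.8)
The plan is to transfer the computation from the singular surface $S_0$ to its crepant minimal resolution $S=\Gh\hilb(\C^2)$, where Riemann--Roch reduces $\chi$ to a matter of the intersection form, and then to invoke the positive-definiteness of the ADE Cartan form.

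First I would record the key computation on $S$. The minimal resolution of the rational double point $\C^2/\Gh$ is crepant, so $K_S$ is numerically trivial along the exceptional locus; equivalently each component of $\Fh$ is a $(-2)$-curve meeting $K_S$ in degree $0$. Hence for any effective divisor $D$ supported on $\Fh$, adjunction and Riemann--Roch on the smooth surface $S$ give
\[
\chi(\O_D)=-\tfrac12\bigl(D\cdot D+K_S\cdot D\bigr)=-\tfrac12\,D\cdot D.
\]
Under the McKay identification the intersection pairing on the span of the components of $\Fh$ is the negative of the Cartan form of $R$, so writing $[D]=\rho\in R$ we obtain $\chi(\O_D)=\tfrac12\langle\rho,\rho\rangle$.

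Second I would lift $Z$ to $S$. Since $Z$ is proper it is supported on $F$, so $Z=\sum_i m_iF_i$ with $m_i\ge 0$, and its homology class is recorded by the multiplicities along the non-contracted components of $\Fh$. I would produce the canonical integral cycle $\Zh$ on $S$ supported on $\Fh$, with multiplicity $m_i$ along each non-contracted component and with multiplicities along the contracted components the unique nonnegative integers making $\Zh$ the (rounded) cycle-theoretic pullback of $Z$; concretely, the effective lift minimizing $-\Zh\cdot\Zh$ among all effective lifts with the prescribed non-contracted multiplicities. Because the singularities of $S_0$ are rational one expects $f_*\O_{\Zh}=\O_Z$ and $R^1f_*\O_{\Zh}=0$, whence $\chi(\O_Z)=\chi(\O_{\Zh})=\tfrac12\langle[\Zh],[\Zh]\rangle$. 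Finally I would conclude by lattice geometry: the Cartan form is positive definite and takes values in $2\Z$, so $\langle\rho,\rho\rangle\in 2\Z_{>0}$ for every nonzero $\rho$ in the positive cone, giving $\chi(\O_Z)\ge 1$. Equality forces $\langle[\Zh],[\Zh]\rangle=2$, and since the norm-$2$ vectors of an ADE root lattice are exactly its roots, the minimizing lift $[\Zh]$ is a positive root whose non-contracted multiplicities are the $m_i$, i.e. the class of $Z$ lies in the image of $c$. Conversely, a positive root with these multiplicities is an effective norm-$2$ lift, so the minimizer also has norm $2$ and $\chi(\O_Z)=1$.

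The main obstacle is the middle step: justifying that $\chi(\O_Z)$, computed for the Weil (reflexive, non-Cartier) divisor $Z$ on the singular $S_0$, equals $\chi(\O_{\Zh})$ for the norm-minimizing integral lift. This requires pinning down the correct integral cycle through the contracted components, where pullbacks of Weil divisors acquire fractional coefficients, and establishing the pushforward identities $f_*\O_{\Zh}=\O_Z$ and $R^1f_*\O_{\Zh}=0$ from the rationality of the ADE singularities. Once this identification is secured, both the inequality and the characterization of the equality case follow immediately from the positive-definiteness of the Cartan form together with the description of roots as the norm-$2$ vectors of $R$.
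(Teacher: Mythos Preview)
Your plan is essentially the paper's proof: lift $Z$ to an effective divisor $\hat Z$ on $S$ supported on $\hat F$, use adjunction to get $\chi(\O_{\hat Z})=-\tfrac12\hat Z^2$, push forward via the rationality of the ADE singularities to obtain $\chi(\O_Z)=\chi(\O_{\hat Z})$, and conclude by positive definiteness of the Cartan form. The differences are in the details you flag as the obstacle and in the equality case.

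For the lift, the paper writes it down explicitly: $n_j=\lceil k_j/2\rceil$ where $k_j=E_j\cdot\sum_i m_i\hat F_i$, which forces $\delta_j:=\hat Z\cdot E_j\in\{0,-1\}$. This explicit choice is what lets them verify the pushforward identities you leave open: they show $R^1f_*\O_S(-\hat Z)=0$ by checking $\deg\O_S(-\hat Z)|_{E_j}=-\delta_j\ge 0$, and they show $f_*\O_{\hat Z}$ has no embedded points at the nodes by comparing with a nearby Cartier divisor on $S_0$. Your abstract description of $\hat Z$ as the norm-minimizing effective lift is correct (and coincides with the paper's choice, since the $E_j$ are disjoint $(-2)$-curves so the quadratic in $n_j$ is minimized at $k_j/2$), but note the minimizer is not literally unique when $k_j$ is odd, so ``the canonical integral cycle'' and ``the unique nonnegative integers'' overstate things slightly; you would still need the explicit degree check to close the gap.

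For the equality case your argument is cleaner than the paper's. You observe that if the class of $Z$ lies in the image of $c$ then some positive root $\tilde Z$ is an effective lift with $-\tilde Z^2=2$, so the minimizing lift $\hat Z$ satisfies $-\hat Z^2\le 2$, while positive definiteness and evenness force $-\hat Z^2\ge 2$; hence $\chi(\O_Z)=1$. The paper instead proves $\hat Z^2=\tilde Z^2$ by a direct parity computation of $\tfrac12\sum_j(\tilde\delta_j^2-\delta_j^2)$, using $\delta_j\equiv\tilde\delta_j\pmod 2$ and $\delta_j,\tilde\delta_j\in\{-1,0,1\}$. Your minimality argument subsumes this and is what the paper's computation is implicitly establishing.
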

\begin{proof}
 An effective Weil divisor $\Zh$ on $S$ which is supported on $\Fh$,
determines an element of $R^+$ via McKay correspondence. By the adjunction formula, we have
\[
\chi(\O_{\Zh})=-\Zh^2/2.
\]
Since $S$ is the resolution of rational singularities by
\cite[Proposition 1]{Artin} \begin{equation}
\label{equ:rational-singularity} \chi(\O_{\Zh})\ge 1, \end{equation}
and the equality holds if and only if $\Zh$ corresponds to a positive
root.

The idea of our proof is to relate $\chi(\O_Z)$ to $\chi(\O_{\Zh})$
where $\Zh \subset S$ is defined as follows.  Suppose $$Z=\sum_{i=1}^r
m_i F_i,$$ we define a divisor $$\Zh=\sum_{i=1}^r m_i \Fh_i +
\sum_{j=1}^s n_j E_j$$ on $S$ where $\Fh_i$ is the proper transform of
$F_i$ via $f:S \to S_0$, $$E_1,\dots,E_s$$ are exceptional curves of
$f$, and $n_j$ is given as follows. Let
\[
k_{j} = E_{j}\cdot \sum _{i} m_{i}\hat{F}_{i}
\]
be the sum of the multiplicities of the curves incident to
$E_{j}$. We define
\[
n_{j} = \left\lceil\frac{k_{j}}{2}\right\rceil.
\]
Note that
\[
\delta _{j} = \hat{Z}\cdot E_{j} = -2n_{j}+k_{j}
\]
is either 0 or $-1$ by construction.

Now we claim that $\chi(\O_Z)=\chi(\O_{\Zh})$. We show that $\im
(\hat{Z})$, the scheme theoretic image of $\hat{Z}$ under $f$, is
equal to $Z$. Since $f$ is an isomorphism away from singularities it
suffices to show that $\im(\Zh)$ does not carry any embedded points at
singularities. Let $Z'$ be a minimal divisor supported on $F$ such
that $C=Z+Z'$ is a Cartier divisor. By the choice of $\Zh$ one can see
that $f^*C=\Zh+\Zh'$, where $\Zh'$ is the proper transform of $Z'$. In
fact they are linearly equivalent divisors on $S$ and are isomorphic
away from exceptional curves. Since $f^*C$ is locally cut out by a
single equation it is obvious that
$C=\im(f^*C)$. But $$\im(\Zh)\subseteq \im(f^*C),$$ and $C$, being
locally cut out by a single equation on $S_0$ does not have any embedded
points. This shows that $\im(\Zh)$ does not have any embedded points either and
hence $\im(\Zh)=Z$.  Since $S$ is a resolution of rational
singularities $R^1f_* \O_S=0$, and moreover $R^2f_*\F=0$ for any
coherent sheaf $\F$ on $S$, so we get immediately $R^1f_*\O_{\Zh}=0$,
and therefore \begin{equation}\label{equ:spectral-sequence}
\chi(\O_{\Zh})=\chi(f_*\O_{\Zh})\end{equation} by Leray spectral
sequence.  Since $f_*\O_S\cong\O_{S_0}$ one can see the isomorphism of
ideal sheaves $$\I_Z=\I_{\im(\Zh)}\cong f_*\I_{\Zh}$$ and hence we
have a short exact sequence $$0\to \I_{Z} \to \O_{S_0} \to
f_*\O_{\Zh}\to 0.$$ Here we used the fact that $R^1f_*(\I_{\Zh})=0$
which is true because the restriction of $\I_{\Zh} \cong \O_{S}(-\Zh)$
on each $E_j$ is equal to $-\delta _{j}$ which is non-negative. The
short exact sequence above implies that $f_*\O_{\Zh}\cong \O_Z$. Now
the claim follows from (\ref{equ:spectral-sequence}).

To finish the proof of lemma we only need to show if the homology
class of $Z$ corresponds to a positive root then $\Zh$ is a positive
root. Since $n_j\geq 0$, $\hat{Z}$ is a positive root if and only if
$\Zh^2=-2$.

Let
\[
\tilde{Z} = \sum _{i}m_{i}\hat{F}_{i} + \sum _{j} \tilde{n}_{j}E_{j}
\]
be any positive root which corresponds to $Z$. Since $\tilde{Z}$ and $E_{j}$
are distinct positive roots in an ADE root system,
\[
\tilde{\delta }_{j} = \tilde{Z}\cdot E_{j}
\]
is equal to $-1$, $0$, or $1$. We compute
\begin{align*}
\hat{Z}^{2}-\tilde{Z}^{2}& = (\hat{Z}-\tilde{Z})\cdot
(\hat{Z}+\tilde{Z})\\
&=\sum _{j} (n_{j}-\tilde{n}_{j}) (\delta _{j}+\tilde{\delta }_{j}).
\end{align*}

Since
\begin{align*}
\delta _{j} = E_{j}\cdot \hat{Z}& = -2n_{j}+k_{j},\\
\tilde{\delta }_{j} = E_{j}\cdot \tilde{Z}&= -2\tilde{n}_{j}+k_{j}
\end{align*}
we get
\[
n_{j}-\tilde{n}_{j} = \frac{1}{2}\left(\tilde{\delta }_{j}-\delta _{j}
\right)
\]
and therefore
\[
\hat{Z}^{2} - \tilde{Z}^{2} = \sum _{j} \frac{1}{2}\left(\tilde{\delta }_{j}^{2} - \delta _{j}^{2} \right).
\]
Since $n_{j}-\tilde{n}_{j}$ is an integer, we know that $\delta _{j}$ is congruent to $\tilde{\delta }_{j}$ modulo 2. Then since $\delta _{j},\tilde{\delta }_{j} \in \left\{-1,0,1 \right\}$, we conclude that
\[
\tilde{\delta }^{2}_{j}-\delta _{j}^{2}=0
\]
so $\hat{Z}^{2}=\tilde{Z}^{2}=-2$, and hence $\hat{Z}$ is a positive
root.
\end{proof}

\section{BPS invariants} \label{sec:GVinvarinats}
As mentioned in \S~ \ref{sec:Intro}, BPS invariants of $Y$ are defined as the equivariant residues at the $\C^*$-fixed locus of the corresponding moduli space. More precisely, let $\beta \in H_2(Y,\Z)$. By Theorem \ref{thm:fixed-locus} we already know that the fixed locus is either empty or a single point, and the later happens only when $\beta$ corresponds to a positive root. In fact if $$\beta=\sum_{i=1}^r m_i [F_i]$$ is such a curve class, then we proved this fixed point of moduli space is given by $\O_C$ the structure sheaf of a 1-dimensional \CM scheme having generic multiplicity $m_i$ along $F_i$. We also showed $C$ is moreover scheme-theoretically supported on $S_0$. Now fix such a class $\beta$ and the curve $C$ determined as above. The corresponding BPS invariant is by definition
$$n^L_\beta(Y)=\frac{e(\ext^2(\O_C,\O_C))}{e(\ext^1(\O_C,\O_C))},$$ where $e(-)$ is the equivariant Euler class.

For any nonzero integer $a$ by $\C_a$ we mean the weight $a$
representation of $\C^*$. The canonical bundle of $Y$, is trivial with
weight $-3$. We have $$\hom(\O_C,\O_C) \cong \C \;\;\text{and}\;\;
\ext^3(\O_C,\O_C)\cong \C_{3},$$ where the first isomorphism is
because of the stability of $\O_C$ and the second is because of
equivariant Serre duality. Let $$\chi(\O_C,\O_C)=\sum_{p=0}^3
(-1)^p\ext^p(\O_C,\O_C)$$ be the Euler characteristic where the right
hand side is regarded as a virtual $\C ^{*}$ representation. Since we
already know the first and the last term contributions, if we find
$\chi(\O_C,\O_C)$ we will be able to determine $n^L_\beta(Y)$ (note we
only need the ratio of the Euler classes of the third and second
terms).

By using equivariant Hirzebruch-Riemann-Roch we have (see \cite[Lemma 6.1.3]{Huybrechts-Lehn-book}) \begin{equation} \label{equ:HRR}\CH\left(\chi(\O_C,\O_C)\right)=\int_Y \CH^\vee(\O_C)\CH(\O_C)\td(Y)\end{equation} where $$\CH=\operatorname{ch}_0+\operatorname{ch}_1+\operatorname{ch}_2+\cdots$$ is the equivariant Chern character and $$\CH^\vee=\operatorname{ch}_0-\operatorname{ch}_1+\operatorname{ch}_2-\cdots.$$
$\td(-)$ denotes the equivariant Todd class. By this integral we mean equivariant push-forward from $Y$ to a point.

One way to compute $\CH(\O_C)$ is to find an equivariant locally free
resolution of $\O_C$. However, in most of our cases $C$ is not a local
complete intersection in $Y$ and so it is difficult to find such a
resolution for $\O_C$ in such cases. We use an alternative method to
compute equivariant Chern characters of $\O_C$ at the components of
fixed locus of $Y$. By Atiyah-Bott residue formula, this is all we
need to know in order to evaluate the integral in (\ref{equ:HRR}). As
in the proof of Lemma \ref{lem:euler-char}, we can construct a divisor
$\Ch$ on $S$ invariant under the induced $\C^*$-action and with the
property $f_*\O_{\Ch}=\O_C$ and $R^if_*\O_{\Ch}=0$ for $i>0$. Applying
Grothendieck-Riemann-Roch to the equivariant projective morphism $f:S
\to Y$ we have this relation in the equivariant $K$-group of
$Y$: $$\CH(\O_C)=\frac{1}{\td(Y)}f_*\left(\CH(\O_{\Ch})\cdot
\td(S)\right).$$ Since $\Ch$ is a divisor on a smooth surface, there
is a natural locally free resolution for $\O_{\Ch}$: $$0\to
\O_S(-\Ch)\to \O_S \to \O_{\Ch}\to 0.$$ Let $i_P:P \hookrightarrow Y$
be a $\C^*$-fixed component. By correspondence of residues (e.g. see
\cite[\S~ 3]{Bertram}) we can write
\begin{equation} \label{equ:GRR}
i_P^*(\CH(\O_{C}))=\frac{e(N_{P|Y})}{i_P^*\td(Y)}\cdot\sum_{Q\to
P}j_Q^*f_*\left((1-\CH(\O_S(-\Ch))\cdot
\frac{\td(S)}{e(N_{Q|S})}\right)\end{equation} where $j_Q:Q
\hookrightarrow S$ is a $\C^*$-fixed component in $S$, and the sum is
over all such components mapping into $P$.

Our proof of Theorem~\ref{thm:GVinvariants} is a case by case check
using (~\ref{equ:HRR}) and (~\ref{equ:GRR}). We carry this
out for some of the most complicated roots in the case corresponding
to $E_{8}$; all other cases are similar and simpler. The $E_{8}$ root
system has 120 positive roots and this leads to 36 curve classes on $Y$
having nonzero BPS invariants. The possible values of the nonzero invariants
and the number of classes carrying these invariants and the number of positive roots corresponding to these classes are summarized below:
\begin{center}
\begin{tabular}{|c|c|c|}
\hline
$\sharp$ of positive roots & $\sharp$ of classes $\beta$&$n_{\beta } (Y)$\\ \hline
$32$&$16$&$1$\\ \hline
$4$&$4$&$\frac{1}{2}$\\ \hline
$48$&$12$&$2$\\ \hline
$32$&$4$&$4$\\ \hline
\end{tabular}
\end{center}
Note that the entries of the first column add up to 116, because there are 4 positive simple roots that do not correspond to a curve class on $Y$. These simple roots correspond to black vertices of the $E_8$ Dynkin diagram in Figure \ref{fig:ADE}.

In the following proposition we list the largest class $\beta$ carrying
each possible nonzero invariant.

\begin{prop}
Let $G=A_5$ be the alternating group in 5 elements; this case corresponds to the root system $E_8$. Let $F_1, \dots ,F_4$ be the components of $F$ as in Figure \ref{fig:E8}. Then $$\CH\left(\chi(\O_C,\O_C)\right)=\begin{cases}1-\e^{3t} &  C=3F_1+5F_2+4F_3+3F_4 \\
1-\e^{2t}+\e^{t}-\e^{3t}  &   C=2F_1+4F_2+4F_3+2F_4 \\
1-\e^{t}+\e^{2t}-\e^{3t}  &   C=2F_1+4F_2+3F_3+2F_4 \\
1-2\e^{t}+2\e^{2t}-\e^{3t} &  C=\,\,\,F_1+2F_2+2F_3+\,\,\,F_4
\end{cases}$$ where $t$ is the equivariant parameter.
\end{prop}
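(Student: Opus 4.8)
The plan is to compute the virtual $\C^*$-representation $\chi(\O_C,\O_C)$ through its equivariant Chern character via the Hirzebruch-Riemann-Roch formula~(\ref{equ:HRR}), evaluated by Atiyah-Bott localization on $Y$. The diagonal $\C^*$ fixes only the origin of $\C^3$, so the fixed locus of $Y$ is a finite set of points lying on $F$, and~(\ref{equ:HRR}) becomes the residue sum
\[
\CH\!\left(\chi(\O_C,\O_C)\right)=\sum_{P}\frac{i_P^*\!\left(\CH^\vee(\O_C)\,\CH(\O_C)\,\td(Y)\right)}{e(N_{P|Y})},
\]
the sum ranging over the $\C^*$-fixed points $P\subset Y$. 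Thus everything reduces to the local restrictions $i_P^*\CH(\O_C)$ together with the tangent weights of $Y$ at each $P$. Since the curves $C$ occurring in the four cases are generically not local complete intersections in $Y$, there is no convenient locally free resolution of $\O_C$ on $Y$; this is precisely the obstruction that formula~(\ref{equ:GRR}) circumvents by transporting the computation to the smooth surface $S$.

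First I would, for each class $C=\sum_i m_i F_i$, repeat the construction from the proof of Lemma~\ref{lem:euler-char}: form the invariant divisor $\Ch=\sum_i m_i\Fh_i+\sum_j n_j E_j$ on $S$ with $n_j=\lceil k_j/2\rceil$, record $k_j=E_j\cdot\sum_i m_i\Fh_i$ and $\delta_j=\Ch\cdot E_j\in\{0,-1\}$, and check, exactly as in that lemma, that $f_*\O_{\Ch}=\O_C$ and $R^{>0}f_*\O_{\Ch}=0$. Next I would enumerate the $\C^*$-fixed points $Q$ of $S$, which sit at the torus-fixed points of the $E_8$ configuration of rational curves (the nodes where two components of $\Fh$ meet, and the free ends), and read off at each $Q$ the two tangent weights of $S$ and the multiplicities of $\Ch$ along the branches through $Q$. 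From the two-term resolution $0\to\O_S(-\Ch)\to\O_S\to\O_{\Ch}\to 0$ one gets $i_Q^*\CH(\O_{\Ch})=1-\e^{-i_Q^*c_1(\O_S(\Ch))}$, which I would substitute into~(\ref{equ:GRR}), summing over all $Q$ mapping to a given $P$, to produce $i_P^*\CH(\O_C)$. Finally I would insert these restrictions, together with $\CH^\vee(\O_C)$ and the equivariant Todd class, into the residue sum displayed above and collect terms to obtain the four stated expressions.

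The hard part will be the equivariant bookkeeping at the fixed points lying over the singular points of $S_0$, where several components of $\Fh$ come together and where the contracted curves $E_j$ contribute to the correspondence-of-residues sum $\sum_{Q\to P}$ in~(\ref{equ:GRR}): there the tangent weights of $S$, the branch multiplicities of $\Ch$, and the weight of $N_{Q|S}$ must each be tracked without error. A reliable internal check, which I would apply in every case, is that the final answer must have constant term $1$, coming from $\hom(\O_C,\O_C)\cong\C$ by stability, and top term $-\e^{3t}$, coming from $\ext^3(\O_C,\O_C)\cong\C_{3}$ by equivariant Serre duality; once these endpoints emerge correctly the surviving middle terms record $\ext^2(\O_C,\O_C)-\ext^1(\O_C,\O_C)$ as virtual representations, which is exactly the data needed to read off $n^L_\beta(Y)$ from the ratio of equivariant Euler classes.
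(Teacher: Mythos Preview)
Your plan is essentially the paper's own proof: localize~(\ref{equ:HRR}) on $Y$, and for each fixed point $P\subset Y$ compute $i_P^*\CH(\O_C)$ via~(\ref{equ:GRR}) using the two-term resolution of $\O_{\Ch}$ on $S$, with $\Ch$ built from $C$ exactly as in Lemma~\ref{lem:euler-char}. The internal consistency checks you propose (constant term $1$, top term $-\e^{3t}$) are also the ones implicit in the paper.

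There is one genuine oversight. You assert that the $\C^*$-fixed locus of $S$ consists of isolated points, namely nodes and free ends of the $E_8$ tree. In fact it does not: the exceptional curve $E_3$ is an entire $\P^1$ fixed pointwise by the $\C^*$-action. The reason is that $E_3$ is contracted by $f$ to a single fixed point $P\in Y$, and the three tangent weights of $Y$ at $P$ are all equal to $-1$; so the induced action on the exceptional $\P^1$ over $P$ inside $S$ is trivial. Consequently the sum $\sum_{Q\to P}$ in~(\ref{equ:GRR}) is not a finite sum over points but a push-forward from the one-dimensional component $E_3$, and the term $\td(S)/e(N_{Q|S})$ must be expanded in the hyperplane class $[x]$ of $E_3\cong\P^1$ before integrating. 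This is precisely where the computation for the node $P$ differs in character from those for $P_1,\dots,P_4$, and it is the step most likely to go wrong if you proceed under the assumption that all fixed loci on $S$ are zero-dimensional. Once you treat $E_3$ as a fixed curve, the rest of your outline matches the paper's argument verbatim.
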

\begin{figure}
\centering
\includegraphics{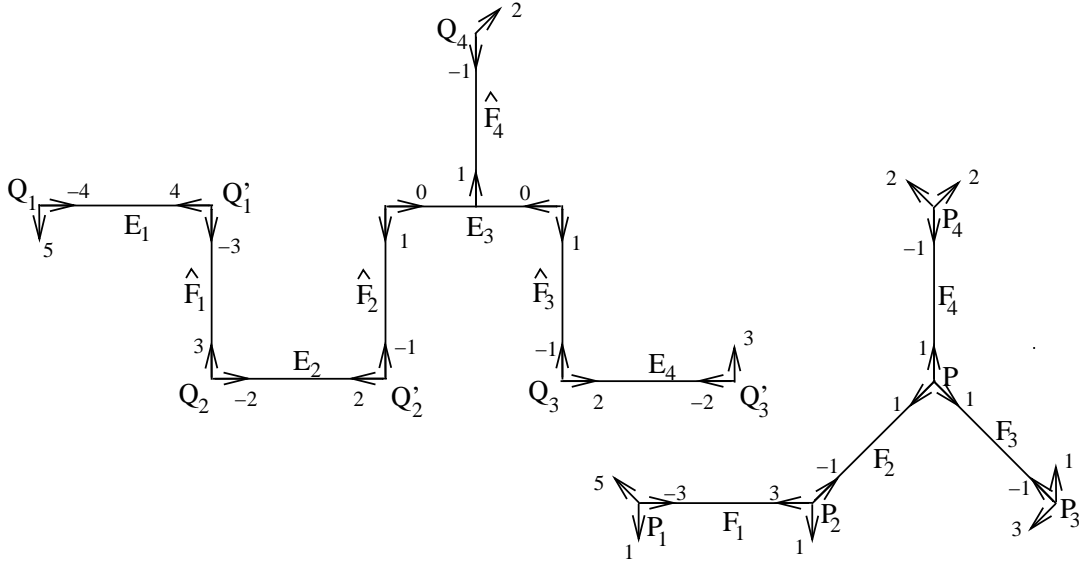}
\caption{The fibers over origin on $\hat{A}_5\hilb(\C^2)$ (left) and in $A_5\hilb(\C^3)$ (right)}
\label{fig:E8}
\end{figure}

The following corollary is now immediate:
\begin{cor}
Let $G$ and $F_1, \dots, F_4$ be as in proposition and let $\beta =k_{1}[F_{1}]+k_{2}[F_{2}]+k_{3}[F_{3}]+k_{4}[F_{4}]$. Then
\begin{center}
\begin{tabular}{|c|c|c|c|}
\hline
$(k_{1},k_{2},k_{3},k_{4}) $&$e (\ext ^{2} (\O _{C},\O _{C})) $&$e (\ext ^{1} (\O _{C},\O _{C})) $& $n_{\beta } (Y)$\\ \hline
$(3,5,4,3) $&$1$&$1$&$1$\\ \hline
$(2,4,4,2) $&$t$&$2t$&$\frac{1}{2}$\\ \hline
$(2,4,3,2) $&$2t$&$t$&$2$\\ \hline
$(1,2,2,1) $&$(2t)^{2}$&$t^{2}$&$4$\\ \hline
\end{tabular}
\end{center}
The values in the last column are as given in Theorem
\ref{thm:GVinvariants}. Note that the values given in the second and
the third columns are modulo possibly cancelling factors common in two
Euler classes.
\end{cor}

\begin{proof}[Proof of Proposition.] In Figure \ref{fig:E8}, $P_1,\,P_2,\,P_3,\,P_4,\,P$ and $$Q_1,\,Q'_1,\,Q_2,\,Q'_2,\,Q_3,\,Q'_3,\,Q_4,\,E_3$$ are the $\C^*$-fixed components of $$Y=A_5\hilb(\C^3)\;\; \text{and} \;\; S=\hat{A}_5\hilb(\C^2),$$ respectively. $f$ maps $Q_i$ and $Q'_i$ to $P_i$ and $E_3$ to $P$. The numbers in this figure stand for $\C^*$-weights of the tangent spaces at the fixed points. We only prove the first identity; the proof of the rest is similar.
In this case $$\Ch=3\Fh_1+5\Fh_2+4\Fh_3+3\Fh_4+2E_1+4E_2+6E_3+2E_4.$$ Note that this corresponds to the longest root in the root system $E_8$. Using (\ref{equ:HRR}) and (\ref{equ:GRR}), we compute the contribution of each $\C^*$-fixed component of $Y$ to $\CH\left(\chi(\O_C,\O_C)\right)$. We denote by $\chi_i$ the contribution of $P_i$, and by $\chi$ the contribution of $P$. For simplicity, we let $\mu=\e^{t}$, and $\CH(P)=i_P^*\CH(\O_C)$ and $\CH(Q)=j_Q^*\CH(\O_{\Ch})$ and similarly for $P_i$'s, $Q_i$'s and $E_3$. We use the same notation for $\CH^\vee$. Finally by $\CH(Q_i\to P_i)$ we mean the contribution of $j_{Q_i}^*\CH(\O_{\Ch})$ to $i_{P_i}^*\CH(\O_C)$ in (\ref{equ:GRR}), and similarly for $\CH(Q'_i \to P_i)$.
\begin{itemize}
\item \emph{Contribution of $P_1$:} We have $\CH(Q_1)=1-\mu^{-10}$, $$\frac{i^*_{P_1}\td(Y)}{e(N_{P_1|Y})}=  \frac{1}{(1-\mu^{-5})(1-\mu^{-1})(1-\mu^3)}  \text{\;\;\;\; and}$$
    $$\frac{j^*_{Q_1}\td(S)}{e(N_{Q_1|S})}=  \frac{1}{(1-\mu^{-5})(1-\mu^{4})}. $$ Hence $\displaystyle \CH(Q_1\to P_1)=\frac{(1-\mu^{-10})(1-\mu^{3})(1-\mu^{-1})}{(1-\mu^4)}$ and similarly $$\CH(Q'_1\to P_1)=\frac{(1-\mu^{-6})(1-\mu^{-5})(1-\mu^{-1})}{(1-\mu^{-4})}.$$ In the last formula we used $\CH(Q'_1)=1-\mu^{-6}$. So we have \begin{align*}
    \CH(P_1)&=\CH(Q_1\to P_1)+\CH(Q'_1\to P_1)\\&={\mu}^{-11}-{\mu}^{-10}+{\mu}^{-7}-{\mu}^{-6}-{\mu}^{-2}+1
    \end{align*} and hence $\CH^\vee(P_1)=1-{\mu}^{2}-{\mu}^{6}+{\mu}^{7}-{\mu}^{10}+{\mu}^{11}.$ We finally get the contribution of $P_1$ to (\ref{equ:HRR}):
    \begin{align*}\chi_1&=\frac{\CH^\vee(P_1)\cdot \CH(P_1)\cdot i^*_{P_1}td(Y)}{e(N_{P_1|Y})}\\&={\mu}^{-5}-{\mu}^{-3}+{\mu}^{-2}+{\mu}^{-1}-\mu+{\mu}^{2}-{\mu}^{4}-{
\mu}^{5}+{\mu}^{6}-{\mu}^{8}.
 \end{align*}
\item \emph{Contributions of $P_2,\,P_3,\,P_4$:} These cases are quite similar to the case of $P_1$; we only summarize the results below:

$\displaystyle \CH(Q_2 \to P_2)=\frac{(1-\mu^{-6})(1-\mu^{-1})(1-\mu)(1-\mu^{-3})}{(1-\mu^2)(1-\mu^{-3})},$

$\displaystyle \CH(Q'_2 \to P_2)=\frac{(1-\mu^{-6})(1-\mu^{-1})(1-\mu)(1-\mu^{-3})}{(1-\mu^{-2})(1-\mu)},$

$\displaystyle \CH(Q_3 \to P_3)=\frac{(1-\mu^{-6})(1-\mu^{-1})(1-\mu)(1-\mu^{-3})}{(1-\mu^{-2})(1-\mu)},$

$\displaystyle \CH(Q'_3 \to P_3)=\frac{(1-\mu^{-6})(1-\mu)(1-\mu^{-1})(1-\mu^{-3})}{(1-\mu^{2})(1-\mu^{-3})},$

$\displaystyle \CH(Q_4 \to P_4)=(1-\mu^{-6})(1-\mu^{-2}),$

$\CH(P_2)={\mu}^{-8}-{\mu}^{-6}-{\mu}^{-2}+1,$

$\CH(P_3)={\mu}^{-8}-{\mu}^{-6}-{\mu}^{-2}+1,$

$\CH(P_4)={\mu}^{-8}-{\mu}^{-6}-{\mu}^{-2}+1,$

$\chi_2={\mu}^{-4}+2\,{\mu}^{-3}+{\mu}^{-2}+{\mu}^{-1}+2+\mu-{\mu}^{2}-2\,{
\mu}^{3}-{\mu}^{4}-{\mu}^{5}-2\,{\mu}^{6}-{\mu}^{7},$

$\chi_3={\mu}^{-4}+2\,{\mu}^{-3}+{\mu}^{-2}+{\mu}^{-1}+2+\mu-{\mu}^{2}-2\,{
\mu}^{3}-{\mu}^{4}-{\mu}^{5}-2\,{\mu}^{6}-{\mu}^{7},$

$\chi_4={\mu}^{-4}+{\mu}^{-3}+{\mu}^{-2}+{\mu}^{-1}+1+\mu-{\mu}^{2}-{\mu}^{3}
-{\mu}^{4}-{\mu}^{5}-{\mu}^{6}-{\mu}^{7}.
$
\item \emph{Contribution of $P$:}  Note that $-\Ch\cdot E_3=0$, and hence $\O_S(-\Ch)|_{E_3}$ is trivial with weight -6, so $\CH(E_3)=1-\mu^{-6}$. One sees easily that $$\frac{i^*_{P}\td(Y)}{e(N_{P|Y})}= \frac{1}{(1-\mu ^{-1})^3}.$$ Let $\x$ be the point class on $E_3\cong \P^1$ then $$\frac{j^*_{E_3}\td(S)}{e(N_{E_3|S})}=\frac{2\x}{(1-\e^{-t+2\x})(1-\e^{-2\x})}=\left(\frac{1}{1-\mu^{-1}}-\frac{2\x \mu^{-1}}{(1-\mu^{-1})^2}\right)(1+\x).$$ So we get
    \begin{align*}
    \CH(P)&=(1-\mu^{-6})(1-u^{-1})^3 \cdot f_*\left(\frac{(1+\x)}{1-\mu^{-1}}-\frac{2\x \mu^{-1}}{(1-\mu^{-1})^2}\right)\\&=\mu^{-8}-\mu^{-6}-\mu^{-2}+1.
    \end{align*}
    And finally we get
    \begin{align*}
    \chi&=\frac{(1-\mu^{2}-\mu^{6}+\mu^8)(\mu^{-8}-\mu^{-6}-\mu^{-2}+1)}{(1-\mu^{-1})^3}\\&=-{\mu}^{-5}-3\,{\mu}^{-4}-4\,{\mu}^{-3}-4\,{\mu}^{-2}-4\,{\mu}^{-1}-4
-2\,\mu+2\,{\mu}^{2}+4\,{\mu}^{3}\\&+4\,{\mu}^{4}+4\,{\mu}^{5}+4\,{\mu}^{
6}+3\,{\mu}^{7}+{\mu}^{8}.
\end{align*}
  \end{itemize}
At the end we add all the contributions $$\CH\left(\chi(\O_C,\O_C)\right)=\chi+\chi_1+\chi_2+\chi_3+\chi_4=1-\mu^3.$$
\end{proof}

\bibliography{../bibtex/mainbiblio}
\bibliographystyle{plain}

\noindent Department of Mathematics, University of British Columbia\\
\medskip
\noindent Department of Mathematics, California Institute of Technology\\
\medskip
\noindent Email address: jbryan@math.ubc.ca, agholamp@caltech.edu

 \end{document}